\newtheorem{theorem}{Theorem}[section]
\newtheorem*{theorem*}{Theorem}
\newtheorem{example}[theorem]{Example}
\newtheorem{remark}[theorem]{Remark}
\newtheorem{problem}[theorem]{Problem Statement}
\begin{document}

\begin{frontmatter}
\title{ Solution of the Linearly Structured   Partial Polynomial Inverse
Eigenvalue Problem }
\author[rvt]{Suman Rakshit} 
\ead{sumanrakshit1991@gmail.com}
\author[rvt]{S. R. Khare }
\ead{srkhare@maths.iitkgp.ernet.in}

\address[rvt]{Department of Mathematics, Indian Institute of Technology Kharagpur, Kharagpur 721302, India}

\begin{abstract}
In this paper, linearly structured partial polynomial inverse
eigenvalue problem  is considered  for the $n\times n$  matrix
polynomial of arbitrary degree $k$. Given a set of $m$
 eigenpairs ($1 \leqslant m \leqslant kn$), this problem
concerns with computing the matrices $A_i\in{\mathbb{R}^{n\times
n}}$ for $i=0,1,2, \ldots ,(k-1)$ of specified linear structure
such that the matrix polynomial $P(\lambda)=\lambda^k I_n
+\sum_{i=0}^{k-1} \lambda^{i}
 A_{i}$ has the given  eigenpairs as its eigenvalues and eigenvectors.
Many practical applications give rise to the linearly structured
structured matrix polynomial.
  Therefore, construction of the linearly structured  matrix polynomial  is the most important aspect of the polynomial inverse
eigenvalue problem(PIEP). In this paper, a necessary and
sufficient condition for the existence of the solution of this
problem is derived. Additionally, we characterize the class of all
solutions to this problem by giving the explicit expressions of
solutions. The results presented in this paper address some
important open problems in the area of PIEP raised in De Teran,
Dopico and Van Dooren [\emph{SIAM Journal on Matrix Analysis and
Applications}, $36(1)$ ($2015$), pp $302-328$].  An attractive
feature of our solution approach is that it does not impose any
restriction  on the number of eigendata for computing the solution
of PIEP. The proposed method is validated with various numerical
examples on a spring mass problem.
\end{abstract}

\begin{keyword}
Matrix polynomial, linearly structured  matrix,  polynomial
inverse eigenvalue problem, polynomial eigenvalue problem.
\end{keyword}
\end{frontmatter}

\section{Introduction}

Consider the higher order system of ordinary differential
equations of the form
\begin{equation}
\label{eqn:system_poly} A_k \frac{d^k v(t)}{dt^k}+A_{k-1}
\frac{d^{k-1} v(t)}{dt^{k-1}}+\cdots + A_1 \frac{d v(t)}{dt}+ A_0
v(t) =0
\end{equation}
where $A_i\in{\mathbb{R}^{n\times n}}$ for  $i=0,1,2, \ldots  ,k$
and $A_k$ is a nonsingular matrix.

Assuming  the solution of \eqref{eqn:system_poly} is  of the form
$v(t)=x e^{\lambda t}$, using separation of variables,
\eqref{eqn:system_poly} leads to the higher order polynomial
eigenvalue problem
\begin{equation}
\label{eqn:system_polyEIG}
 P(\lambda)x=0
\end{equation}
where $P(\lambda)= \lambda^k A_k + \lambda^{k-1}
A_{k-1}+\cdots+\lambda A_1+A_0\in{\mathbb{R}^{n\times
n}[\lambda]}$ is known as matrix polynomial of degree $k$. The
comprehensive theory and application of the matrix polynomial  is
discussed in the classic reference \cite{gohberg2005matrix}.

A matrix polynomial $P(\lambda)$ is regular when $P(\lambda)$ is
square and the scalar polynomial det($P(\lambda))$ has at least
one nonzero coefficient. Otherwise, $P(\lambda)$ is said to be
singular. We assume the matrix polynomial $P(\lambda)$ is regular
throughout this paper.
 The  roots of  $det(P(\lambda))=0$ are the
eigenvalues of the matrix polynomial $P(\lambda)$. The vectors $y
\neq 0$ and $z \neq 0$ are corresponding left and right
eigenvectors satisfying
 $z^H P(\lambda)=0$ and  $P(\lambda)y=0$ where $z^H$ denotes the conjugate transpose of $z$.
If the matrix  $A_k$ is nonsingular, then the matrix polynomial
$P(\lambda)$   has  $kn$ finite eigenvalues and eigenvectors.  The
$kn$ eigenvalues of $P(\lambda)$ are either real numbers or if
not, are  complex conjugate pairs.
 The polynomial eigenvalue problem concerns with
determining the eigenvalues and corresponding eigenvectors of the
 matrix polynomial $P(\lambda)$. This problem arises in many
practical situations, for instance, vibration analysis of
structural mechanical and acoustic system, electrical circuit
simulation, fluid mechanics, etc
\cite{datta2010numerical,friswell1995finite}. This  problem is
well studied  in the literature and a lot of literature exists
addressing the ways to solve the polynomial problem (see
\cite{berhanu2005polynomial,datta2010numerical,mackey2006structured,
tisseur2001quadratic} and the references therein).

Mostly, matrix polynomial arising from practical applications are
often inherently structured.
 For
example, they are all symmetric \cite{cai2009solutions},
skew-symmetric \cite{dmytryshyn2014skew}, they alternate between
symmetric and skew-symmetric \cite{mehrmann2002polynomial},
symmetric tridiagonal \cite{bai2007symmetric}, etc. Also,
pentadiagonal matrices occur in the discretization of the
fourth-order differential systems \cite{gladwell1986inverse}.
Generally, these matrices $A_i$ for $i=0,1,2, \ldots ,k$ are
linearly structured matrices \cite{lancaster2002lambda}. A matrix
polynomial $P(\lambda) $ in which  the coefficient matrices are
linearly structured, is known as linearly structured matrix
polynomial.
 Since the matrix $A_k$ is often
diagonal and positive definite in various applications, we assume,
without loss of generality, that the leading coefficient $A_k$ is
an identity matrix. In this case, the matrix polynomial is
referred to as a monic matrix polynomial of degree $k$.

The polynomial  inverse eigenvalue problem (PIEP) addresses the
construction of a matrix polynomial  $P(\lambda)=\sum_{i=0}^{k}
\lambda^{i} A_{i} \in{\mathbb{R}^{n\times n}[\lambda]}$ from the
given eigenvalues and associated eigenvectors. PIEP arises in many
applications where parameters of a certain physical system are to
be determined from the knowledge  of its dynamical behavior. It
has  applications in  the mechanical vibrations, aerospace
engineering, molecular spectroscopy, particle physics, geophysical
applications, numerical analysis, differential equations etc (see
for instance
\cite{barcilon1974solution,barnes1995inverse,chu1998inverse,chu2001inverse,muller1992inverse,parker1981numerical}).

Generally,  a small number of eigenvalues and eigenvectors  of the
associated eigenvalue problem are available from the computation
or measurement. Unfortunately there is no analytical  tool
available to evaluate the entire eigendata of a large  physical
system. It should be mentioned that when the problem is large, as
in the case with the most engineering applications, state of art
computational methods are capable of computing a very few
eigenvalues and associated eigenvectors. Therefore, it might be
more sensible to solve the polynomial inverse eigenvalue problem
when only a few measured eigenvalues and associated eigenvectors
are available.

 The
construction of the matrix polynomial $P(\lambda)=\sum_{i=0}^{k}
\lambda^{i}
 A_{i} \in{\mathbb{R}^{n\times n}[\lambda]}$  using the partially described eigendata is  known as the partial polynomial inverse
eigenvalue problem(PPIEP). In  view of practical applications, it
might be more realistic to solve PPIEP with these structure
constraints on the coefficient matrices. This problem is termed as
the structured partial polynomial inverse eigenvalue
problem(LPPIEP). The structure constraint imposes a great
challenge for solving this problem.

The inverse eigenvalue  problem (IEP) for linear and quadratic
matrix polynomial have been well studied in the  literature  since
the $1970$s  (see \cite{de1979imbedding} the references therein).
Some previous attempts at solving the inverse eigenvalue  problem
are listed in \cite{al2009inverse,
gladwell2014test,hald1976inverse,parlett2016inverse,pickmann2007inverse}.
 A large number of papers have been published on
the linear inverse eigenvalue  problem
\cite{elhay2002affine,rakshit2019symmetric,ram1993inverse}. An
excellent review of this area can be found in the classic
reference \cite{chu1998inverse}. Special attention is paid to the
quadratic inverse eigenvalue problem(QIEP)
(see\cite{bai2007symmetric,cai2009solutions,chu2004inverse,datta2001theory,datta2011solution,
kuo2006solutions,lancaster2007inverse,lancaster2014inverse,ram1996inverse,yuan2011class}).
Most of the papers solve QIEP for the symmetric structure
(see\cite{yuan2011class,cai2009solutions,kuo2006solutions}) and
symmetric tridiagonal structure
(see\cite{bai2007symmetric,ram1996inverse}). The quadratic inverse
eigenvalue problem is considered in the context of solving the
finite element model updating problem
\cite{friswell1995finite,moreno2009symmetry,
 mottershead1993model,
suman2017fem} and eigenstructure  assignment problem
\cite{datta2000partial,nichols2001robust}.

Some earlier  attempts at solving the higher order PIEP are listed
in
\cite{barcilon1974solution,batzke2014inverse,de2015matrix,mclaughlin1976inverse}.
Also, IEP for the matrix polynomial of degree $k $ is considered
in the context of solutions of active vibration control
(see\cite{cai2012robust,mao2013minimum,ramadan2010partial,wang2013partial}).
Most significant  contributions to the solution of the higher
order PIEP have been made in
\cite{batzke2014inverse,de2015matrix}.
 In
\cite{batzke2014inverse}, higher order PIEP for the
$T$-Alternating and $T$-Palindromic  matrix polynomials of degree
$k$ are considered. These results are most phenomenal so far on
the solution of higher order structured PIEP.
 In \cite{de2015matrix}, authors mention an important open
problem in this area, namely, the inverse eigenvalue problems for
structured matrix polynomials such as symmetric, skew-symmetric
matrix polynomials, etc. In this paper, we attempt at addressing
this open problem providing the solution of PIEP.

 Throughout this paper, we shall
adopt the following notations.
  $A \otimes B$ denotes the
Kronecker product of the two matrices $A$ and
 $B$. Also, $\mathrm{ Vec}(A)$ denotes the vectorization of the matrix
 $A$. $\Vert A  \Vert _F$ and $\Vert A  \Vert _2$ denote the Frobenius norm and $2$-norm of the
 matrix $A$ respectively.  $\mathcal{L}$  denotes the real
linear subspaces of $\mathbb{R}^{n\times n}$ representing the
linearly structured  matrices. $A^\dag$ is the Moore Penrose
pseudoinverse of $A$. $I_n $ denotes the identity matrix of size
$n\times n $. Also, $e_i$ is the $i^{\mathrm{th}}$  row of $I_{k}$
for $1\leq i \leq k$. 
\begin{problem}
\label{sec:ProblemFormulation} {\it \textbf{LPPIEP:}}
 {\rm {Given two positive integers $k$ and $n$, a set of
partial eigenpairs $(\lambda_j, \phi_j)_{j=1}^{m} $ (where $1\leq
m\leq kn$),  construct a
  monic  matrix polynomial  $ P(\lambda)=\lambda^k I_n
+\sum_{i=0}^{k-1} \lambda^{i}
 A_{i} \in{\mathbb{R}^{n\times n}[\lambda]}$ of degree $k $ in such a way that matrices  $A_i\in \mathcal{L} $ are symmetric  for
$i=0,1,2, \ldots ,(k-1)$ and $P(\lambda)$ has the specified set
$(\lambda_j, \phi_j)_{j=1}^{m} $
 as its
eigenpairs.}}
\end{problem}

\subsection*{Contributions}
In this paper, we consider the \emph{linearly structured
partial polynomial inverse eigenvalue problem for the monic
matrix polynomial of arbitrary degree $k$}. The authors believe
that this problem, in its full generality, has not been addressed
earlier in the literature. Our results solve some open problems in
the theory of polynomial inverse eigenvalue problem (see
 \cite{de2015matrix}).

In particular, key  contributions made in this paper are listed
below:
\begin{itemize}
\item
The proposed   method is capable to solve LPPIEP using
 a set  of $m$ ( $1\leq m \leq kn$) eigenpairs
without imposing any restrictions  on it, unlike some instances in
the past where certain restrictions on $m$ are imposed (see
\cite{bai2007symmetric,cai2009solutions,yuan2011class}) for
computing the solution of inverse eigenvalue problem in the case
of quadratic matrix polynomial.
\item
The proposed   method is capable to solve LPPIEP  for a monic
matrix polynomial of  arbitrary  degree $k$.
\item
We derive some necessary and sufficient conditions on the
eigendata for the existence of solution of this problem.
\item
We completely characterize the class of solutions of this problem and present the explicit expression of the solution.
\end{itemize}

\subsection*{Real-Form Representations of Eigenvalues and Eigenvectors}

We assume that the  $m$ eigenvalues of a matrix polynomial are
given of which $t$ are complex conjugate pairs and remaining
$m-2t$ are real. Also, complex  eigenvalues are $\alpha_j \pm i
\beta_j$ for $j=1,2, \ldots ,t$ and  real eigenvalues are
$e_{2t+1},e_{2t+2},\dots, e_{m}$. Eigenvectors corresponding  to
the complex eigenvalues are $u_j \pm i v_j$ and eigenvectors
corresponding  to the real eigenvalues are $\phi_{2t+1},
\phi_{2t+2}, \ldots  \phi_{m}$.

We  relate this pair of complex eigenvalues with a matrix $E_j \in
\mathbb{R}^{2 \times 2}$ given by
\begin{equation*}
E_j= \left[
\begin{matrix}
\alpha_j& \beta_j\\
-\beta_j & \alpha_j\\
\end{matrix}
\right].
\end{equation*}
Thus given a set of $m$ eigenvalues, we relate these numbers with
a real block-diagonal matrix $E \in \mathbb{R}^{m \times m}$ of
the following form
\begin{equation}
\label{eqn:eigenvalue-vector242}
 E=diag(E_1,E_2, E_3,\dots, E_t, e_{2t+1},\dots, e_{m}).
\end{equation}
Then $E$ is  the real-form matrix representation of these $  m$
eigenvalues in real form. Similarly, for a set of $m$ eigenvectors
a real-form matrix representation is given by
\begin{equation}
\label{eqn:eigenvalue-vector243}
  X= \left[
\begin{matrix}
u_1 & v_1 & \ldots & u_t & v_t & \phi_{2t+1} & \ldots & \phi_{m}
\end{matrix}
\right]  \in \mathbb{R}^{n \times m}.
\end{equation}
 Thus the  pair $(X,E)$ is a real matrix eigenpair of the
 matrix polynomial  of degree $k$,  then it satisfies
\begin{equation}
\sum_{i=0}^{k}   A_i X
 E^i=0.
\end{equation}
This relation is known as eigenvalue eigenvector relation for the
 matrix polynomial of degree $k$.

\subsection{Linearly structured  matrices and its  structure specifications}
Linearly structured  matrix is a linear combinations of sub
structured matrices. Let $A \in \mathcal{L}$  be a linearly
structured matrix of the form
\begin{equation}
\label{eqn:define_A} A=\sum\limits_{\ell=1}^r S_\ell \alpha_\ell
\end{equation}
where $\alpha_1, \alpha_2, \dots  \alpha_r$ are the structure
parameters,  $r$ is the dimension  and \{$S_\ell$$\in$
${\mathbb{R}^{n\times n}}$  : $ \ell=1,2, \dots r$\} is a standard
basis  of $\mathcal{L}$.
 Here $\left[
\begin{smallmatrix}
\alpha_1  & \alpha_2  & \alpha_3  & \alpha_4  & \cdots   &
\alpha_{r-1} &
 \alpha_{r}
\end{smallmatrix}
\right]^T$ is the coordinate  vector of $A$ w.r.t the above
standard basis.

Matrix $A$ is the linear combinations of the sub structured
matrices $S_\ell$ for $ \ell=1,2, \dots r$.

We give some examples of linearly structured matrices in the table given below.
\pagebreak
\begin{table}[h]
\caption{Linearly structured matrices }
\begin{tabular}{|p{5cm}|p{6cm}|}
\hline
Linearly structured matrix  & Dimension  $r$ \\
\hline
Symmetric  &  $\frac{n(n+1)}{2}$\\

Skew symmetric  &  $\frac{n(n-1)}{2}$\\
 Tridiagonal  & $3n-2$\\
Symmetric tridiagonal &  $2n-1$\\
Pentagonal  & $5n-6$\\
Hankel &  $2n-1$\\
Toeplitz &  $2n-1$\\
\hline
\end{tabular}
\end{table}

\section{Solution  of  LPPIEP}
\label{sec:existence_tri}
 In this section, we  obtain the solution   of
 LPPIEP from the eigenvalue-eigenvector relation for monic matrix
polynomial of degree $k$ which is  given by
\begin{equation}
\label{eqn:monic_quadratic2} \sum_{i=0}^{k-1}   A_i X E^i= -X E^k
\end{equation}
where $X\in$ ${\mathbb{R}^{n\times m}}$  and  $E\in$
${\mathbb{R}^{m\times m}}$.

It is clear that \eqref{eqn:monic_quadratic2} is a nonhomogenous
linear system of $nm$ equations.  Therefore, the solution of
LPPIEP is obtained by computing the linearly structured  solution
$A_{i}$ of \eqref{eqn:monic_quadratic2}.

We now discuss an important concept of vectorization  of a matrix
which will be used to derive  the solution of LPPIEP.

\subsection*{\textbf{Vectorization  of a
linearly structured matrix}} \label{sec:vectorization1}

Vectorization of a matrix $A\in \mathcal{L}$, is denoted by
$\mathrm{ Vec}(A)$  and  is defined as a vector in
${\mathbb{R}^{n^2\times 1}}$ obtained by stacking the columns of
the matrix $A$ on top of one another.

Define the vector $\mathrm{ Vec_1}(A)$ as
\begin{equation*}
\mathrm{ Vec_1}(A)= \left[
\begin{smallmatrix}
\alpha_1  & \alpha_2  & \alpha_3  & \alpha_4  & \cdots   &
\alpha_{r-1} &
 \alpha_{r}
\end{smallmatrix}
\right]^T.
\end{equation*}

We define the  matrix  $P \in$ ${\mathbb{R}^{n^2 \times r}}$
  as
\begin{eqnarray}
\label{eqn:define_P}
P&=&[\mathrm{Vec}(S_1)~\mathrm{Vec}(S_2)~\cdots~\mathrm{Vec}(S_r)]
\end{eqnarray}
where \{$S_\ell\in$ ${\mathbb{R}^{n\times n}}$ : $ \ell=1,2,
\ldots ,r$\} is a standard basis of $\mathcal{L}$ such that
$\mathrm{ Vec_1}(A)$  is the coordinate vector of $A\in
\mathcal{L}$ w.r.t the above  basis.

It is easy to  see   that  $\mathrm{Vec}(A)$ and
$\mathrm{Vec_1}(A)$ are related through the matrix $P$ as:
\begin{equation}
\label{eqn:monic_quadratic17*} \mathrm{ Vec}(A)= P
\hspace{.17cm}\mathrm{ Vec_1}(A)
\end{equation}

\begin{example}
Consider the symmetric  matrix(linearly structured)  $A\in$
${\mathbb{R}^{3\times 3}}$ as
\begin{equation*}
A= \left[
\begin{matrix}
4 & 2 &  8    \\
2 & 7 & 9\\
8  & 9 & 5\\
\end{matrix}
\right]
\end{equation*}
Then $\mathrm{Vec}(A)\in$ ${\mathbb{R}^{9\times 1}}$ and
$\mathrm{Vec_1}(A)\in$ ${\mathbb{R}^{5\times 1}}$   are  given by
\begin{eqnarray*}
&& \mathrm{Vec}(A) = \left[
\begin{matrix}
4   & 2   &
 8    &
 2    &
7  & 9   &
 8    &
9  & 5  &
\end{matrix}
\right]^T \\
&& \mathrm{Vec_1}(A) = \left[
\begin{matrix}
4   & 2   & 8  &
 7  &
9   & 5  &
\end{matrix}
\right]^T
\end{eqnarray*}
Let, \{$S_\ell\in$ ${\mathbb{R}^{n\times n}}$ : $ \ell=1,2, \ldots
,5$\} be the standard basis of the space of all symmetric  matrices where\\

$S_1=\left[
\begin{smallmatrix}
1 & 0 &  0    \\
0 & 0 & 0\\
0  & 0 & 0\\
\end{smallmatrix}
\right]$, $S_2=\left[
\begin{smallmatrix}
0 & 1 &  0    \\
1 & 0 & 0\\
0  & 0 & 0\\
\end{smallmatrix}
\right]$, $S_3=\left[
\begin{smallmatrix}
0 & 0 &  0    \\
0 & 1 & 0\\
0  & 0 & 0\\
\end{smallmatrix}
\right]$,

$S_4=\left[
\begin{smallmatrix}
0 & 0 &  1\\
0 & 0 & 0\\
1 & 0 & 0\\
\end{smallmatrix}
\right]$,
 $S_5=\left[
\begin{smallmatrix}
0 & 0 &  0    \\
0 & 0 & 1\\
0  & 1 & 0\\
\end{smallmatrix}
\right]$, $S_6=\left[
\begin{smallmatrix}
0 & 0 &  0    \\
0 & 0 & 0\\
0  & 0 & 1\\
\end{smallmatrix}
\right]$.

The matrix  $P \in$ ${\mathbb{R}^{9 \times 6}}$ is given by\\
$P= \left[
\begin{smallmatrix}
1 & 0 &   0 &  0  &  0 & 0     \\
0 & 1 &   0 &  0  &  0 & 0     \\
0 & 0 &   0 &  1  &  0 & 0     \\
0 & 1 &   0 &  0  &  0 & 0    \\
0 & 0 &   1 &  0  &  0 & 0    \\
0 & 0 &   0 &  0   & 1 & 0     \\
0 & 0 &   0 &  1   & 0 & 0    \\
0 & 0 &   0 &  0   & 1 & 0    \\
0 & 0 &   0 &  0   &  0 & 1    \\
\end{smallmatrix}
\right]$

For the symmetric  matrix $A$, it is  straightforward to verify
that \eqref{eqn:monic_quadratic17*} holds.
\end{example}
\subsection*{\textbf{Existence of a  solution  of  LPPIEP}}
\label{sec:existence_affine} In this subsection, we derive  a
necessary and sufficient condition on the eigendata for the
existence of a solution of LPPIEP. Applying vectorization
operation on \eqref{eqn:monic_quadratic2}, we get,
\begin{eqnarray}
&&\label{eqn:monic_quadratic270}
\mathrm{ Vec}\left(\sum_{i=0}^{k-1}   A_i X E^i \right)=-\mathrm{Vec}\left( X E^k\right)\nonumber\\
&&\Rightarrow\label{eqn:monic_quadratic28}\sum_{i=0}^{k-1} \left((
X
E^{i})^T\otimes I\right) \mathrm{Vec}\left(A_{i}\right)=-\mathrm{Vec}\left( X E^k\right)\nonumber\\
&&\Rightarrow\label{eqn:monic_quadratic29}
 \sum_{i=0}^{k-1}  \left( ( X
E^{i})^T\otimes I \right)P \mathrm{Vec_1}\left(A_{i}\right)=-\mathrm{Vec}\left( X E^k \right)~~\ldots\textrm{using \eqref{eqn:monic_quadratic17*}} \nonumber\\
&& \Rightarrow \label{eqn:monic_quadratic311}
  \left[ \begin{smallmatrix} ((X E
^{k-1})^T\otimes I)P  & (( X E^{k-2})^T\otimes I)P & \cdots &
 (X^T\otimes I) P
\end{smallmatrix}
\right] \left[
\begin{smallmatrix}
\mathrm{Vec_1}( A_{k-1})\\
\mathrm{Vec_1}( A_{k-2})\\
\vdots\\
 \mathrm{Vec_1}( A_0)
\end{smallmatrix}
\right]
 =\mathrm{-Vec}( X E^k)\nonumber\\
  && \Rightarrow \label{eqn:monic_quadratic9}
 U x=b
\end{eqnarray}
where
\begin{eqnarray}
\label{eqn:define_U} && U=\left[ \begin{smallmatrix} ((X E
^{k-1})^T\otimes I_n)P  & (( X E^{k-2})^T\otimes I_n)P & \cdots &
 (X^T\otimes I_n) P
\end{smallmatrix}
\right]\in \mathbb{R}^{mn \times {k}r}, \\
\label{eqn:define_x} && x=  \left[
\begin{smallmatrix}
\mathrm{Vec_1}( A_{k-1})\\
\mathrm{Vec_1}( A_{k-2})\\
\vdots\\
 \mathrm{Vec_1}( A_0)
\end{smallmatrix}
\right]\in \mathbb{R}^{{kr} \times 1},\\
\label{eqn:define_b} && b=\mathrm{ Vec}(- XE^k) \in \mathbb{R}^{mn
\times 1}.
\end{eqnarray}
Above  system of linear equations \eqref{eqn:monic_quadratic9} has
$mn$ equations and $kr$ unknowns. We now state a necessary and
sufficient condition for the existence of the solution of a system
of linear equations in the following theorem.
\begin{theorem}
\label{eqn:monic_quadratic38}\cite{ben2003generalized} Let $\Psi
\zeta = \eta$ be a system of linear equations where  $\Psi \in
\mathbb{R}^{p \times q}$ and $\eta \in \mathbb{R}^{p}$. Then $\Psi
\zeta = \eta$ is consistent if and only if $\Psi \Psi^\dag
\eta=\eta$ where $\Psi^\dag$ is the generalized inverse of $\Psi
\in \mathbb{R}^{p \times q}$. 
General solution of $\Psi \zeta = \eta$ is given by
\begin{equation*}
\label{eqn:monic_quadratic33} \zeta=\Psi^\dag \eta+(I_{q}
-\Psi^\dag \Psi) y
\end{equation*}
where $y \in \mathbb{R}^{q \times 1}$ is an arbitrary vector.
Moreover, $\Psi \zeta = \eta$  has a unique solution if and only
if  $\Psi^\dag \Psi=I_{q}$, $\Psi \Psi^\dag \eta=\eta$  and the
unique solution is given by
\begin{equation*}
\label{eqn:monic_quadratic34} \zeta=\Psi^\dag \eta
\end{equation*}
\end{theorem}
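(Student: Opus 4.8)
**Plan for proving Theorem \ref{eqn:monic_quadratic38} (the characterization of consistency and the general solution of a linear system via the Moore–Penrose pseudoinverse).**

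The plan is to prove the three assertions — the consistency criterion, the form of the general solution, and the uniqueness criterion — by relying on the four defining Moore–Penrose identities for $\Psi^\dag$, namely $\Psi\Psi^\dag\Psi = \Psi$, $\Psi^\dag\Psi\Psi^\dag = \Psi^\dag$, $(\Psi\Psi^\dag)^T = \Psi\Psi^\dag$, and $(\Psi^\dag\Psi)^T = \Psi^\dag\Psi$. The central observation is that $\Psi\Psi^\dag$ is the orthogonal projector onto the column space (range) of $\Psi$, and $\Psi^\dag\Psi$ is the orthogonal projector onto the row space of $\Psi$; everything follows by bookkeeping with these projectors.

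First I would prove consistency. If $\Psi\zeta = \eta$ has a solution $\zeta_0$, then $\Psi\Psi^\dag\eta = \Psi\Psi^\dag\Psi\zeta_0 = \Psi\zeta_0 = \eta$ using $\Psi\Psi^\dag\Psi = \Psi$. Conversely, if $\Psi\Psi^\dag\eta = \eta$, then $\zeta = \Psi^\dag\eta$ is an explicit solution, since $\Psi(\Psi^\dag\eta) = \eta$ by hypothesis. This disposes of the first claim and simultaneously exhibits one particular solution. Next, for the general solution, assume consistency and let $\zeta$ be arbitrary of the stated form $\zeta = \Psi^\dag\eta + (I_q - \Psi^\dag\Psi)y$. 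Applying $\Psi$ gives $\Psi\Psi^\dag\eta + (\Psi - \Psi\Psi^\dag\Psi)y = \eta + 0 = \eta$, so every such $\zeta$ is a solution. For the reverse inclusion, if $\zeta$ is any solution, write the identity $\zeta = \Psi^\dag\eta + (I_q - \Psi^\dag\Psi)\zeta + \Psi^\dag\Psi\zeta - \Psi^\dag\eta$; the correction term is $\Psi^\dag(\Psi\zeta - \eta) = \Psi^\dag \cdot 0 = 0$, so $\zeta = \Psi^\dag\eta + (I_q - \Psi^\dag\Psi)\zeta$, which is of the required form with $y = \zeta$. Hence the solution set is exactly $\{\Psi^\dag\eta + (I_q-\Psi^\dag\Psi)y : y \in \mathbb{R}^q\}$.

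Finally, for uniqueness: the solution is unique if and only if the homogeneous part $(I_q - \Psi^\dag\Psi)y$ vanishes for all $y \in \mathbb{R}^q$, i.e. if and only if $I_q - \Psi^\dag\Psi = 0$, equivalently $\Psi^\dag\Psi = I_q$; combined with the standing consistency condition $\Psi\Psi^\dag\eta = \eta$, the unique solution is then simply $\zeta = \Psi^\dag\eta$. I expect no serious obstacle here — the result is classical (and is cited to \cite{ben2003generalized}) — the only point requiring a little care is justifying that $\Psi^\dag(\Psi\zeta - \eta)$ is the correct correction term and that the decomposition identity used in the reverse inclusion is algebraically exact; both are immediate once one writes $\Psi^\dag\Psi\zeta - \Psi^\dag\eta = \Psi^\dag(\Psi\zeta - \eta)$ and invokes $\Psi\zeta = \eta$. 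Since the statement is quoted from the literature, it would also be legitimate to simply cite \cite{ben2003generalized} and omit the proof, but the short argument above is self-contained.
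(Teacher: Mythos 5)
The paper does not prove this statement at all: it is imported verbatim from the cited reference \cite{ben2003generalized} and used as a black box in the proof of Theorem \ref{eqn:monic_quadratic44}. Your self-contained argument is correct and is the standard one. Each step checks out: the forward direction of consistency uses only $\Psi\Psi^\dag\Psi=\Psi$; the converse exhibits $\Psi^\dag\eta$ as a particular solution; the inclusion of the affine set $\{\Psi^\dag\eta+(I_q-\Psi^\dag\Psi)y\}$ in the solution set again uses $\Psi\Psi^\dag\Psi=\Psi$ together with the standing hypothesis $\Psi\Psi^\dag\eta=\eta$; and the reverse inclusion rests on the exact algebraic identity $\zeta=\Psi^\dag\eta+(I_q-\Psi^\dag\Psi)\zeta+\Psi^\dag(\Psi\zeta-\eta)$, whose last term vanishes for any solution. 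The uniqueness criterion follows because the homogeneous part ranges over the null space of $\Psi^\dag\Psi$, which is trivial exactly when $\Psi^\dag\Psi=I_q$ (equivalently, $\Psi$ has full column rank). One minor remark: of the four Penrose identities you list, only $\Psi\Psi^\dag\Psi=\Psi$ is actually used, so the theorem holds for any $\{1\}$-inverse, not just the Moore--Penrose pseudoinverse; this is slightly more general than what the paper needs. Since the result is classical, citing it (as the paper does) is equally legitimate, but your proof fills the gap cleanly.
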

 First, we transform  the eigenvalue eigenvector relation
\eqref{eqn:monic_quadratic2} to a system of linear equations
$Ux=v$. Therefore, determination of
   solution  of LPPIEP
   is equivalent to finding  the   solution
   of the system of linear  equations in \eqref{eqn:monic_quadratic9}.
Thus, necessary and sufficient conditions for the existence of
solution of LPPIEP is same as the system of linear equation
$Ux=v$. We now present  the  main theorem to find a necessary and
sufficient condition for the existence of the solution of LPPIEP.

\begin{theorem}
\label{eqn:monic_quadratic44} Let an  arbitrary  matrix eigenpair
$(E,X)\in \mathbb{R}^{m \times m} \times \mathbb{R}^{n \times m}$
be given as  in Equations \eqref{eqn:eigenvalue-vector242} and
\eqref{eqn:eigenvalue-vector243}. Then  LPPIEP has a solution
 if
and only if $U U^\dag b=b$  where $U$ and $b$ are  defined by
\eqref{eqn:define_U} and \eqref{eqn:define_b}. In that case
expression of $A_{i}\in\mathcal{L}$ for $i=0,1,2,\ldots ,(k-1)$
are given by
\begin{eqnarray}
\label{eqn:Expresion_A}
 && \mathrm{Vec}(A_{i}) = P    \left( (e_{k-i} \otimes
I_{r})\left(U^\dag b+(I_{kr} -U^\dag U) y\right) \right),
\end{eqnarray}
where $y \in \mathbb{R}^{kr \times 1}$ is an arbitrary vector.
Moreover, LPPIEP  has a unique solution  if and only if $U U^\dag
b=b$, $U^\dag U=I_{kr}$. Explicit expressions of
$A_{i}\in\mathcal{L}$
  are given below as
\begin{eqnarray}
&& \mathrm{Vec}(A_{i}) = P    \left( (e_{k-i} \otimes I_{r}
)U^\dag b \right).
\end{eqnarray}
\end{theorem}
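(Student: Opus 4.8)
The plan is to reduce the existence of a solution of LPPIEP to the consistency of the linear system $Ux=b$ derived in \eqref{eqn:monic_quadratic9}, and then invoke Theorem~\ref{eqn:monic_quadratic38}. First I would argue that constructing a monic matrix polynomial $P(\lambda)=\lambda^k I_n+\sum_{i=0}^{k-1}\lambda^i A_i$ with each $A_i\in\mathcal{L}$ and with the prescribed eigenpairs is equivalent to finding matrices $A_i\in\mathcal{L}$ satisfying the real-form eigenvalue--eigenvector relation \eqref{eqn:monic_quadratic2}, namely $\sum_{i=0}^{k-1}A_iXE^i=-XE^k$. Applying $\mathrm{Vec}(\cdot)$ to both sides, using the identity $\mathrm{Vec}(AXB)=(B^T\otimes I)\mathrm{Vec}(A)$ and then the structure relation \eqref{eqn:monic_quadratic17*}, $\mathrm{Vec}(A_i)=P\,\mathrm{Vec_1}(A_i)$, collapses this relation exactly into $Ux=b$ with $U$, $x$, $b$ as in \eqref{eqn:define_U}--\eqref{eqn:define_b}. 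The key point is that this correspondence is a bijection: every $x\in\mathbb{R}^{kr}$ corresponds, block by block, to a unique tuple $(A_{k-1},\dots,A_0)\in\mathcal{L}^k$ via the coordinate identification, so LPPIEP is solvable if and only if $Ux=b$ is consistent.

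Next I would apply Theorem~\ref{eqn:monic_quadratic38} with $\Psi=U$, $\zeta=x$, $\eta=b$: the system $Ux=b$ is consistent if and only if $UU^\dag b=b$, and in that case its general solution is $x=U^\dag b+(I_{kr}-U^\dag U)y$ with $y\in\mathbb{R}^{kr}$ arbitrary. This immediately yields the stated existence criterion. To obtain the explicit formula for the coefficient matrices, I would extract the $(k-i)$-th block of length $r$ from $x$: since $x$ stacks $\mathrm{Vec_1}(A_{k-1}),\dots,\mathrm{Vec_1}(A_0)$ in that order, and $e_j$ is the $j$-th row of $I_k$, the matrix $e_{k-i}\otimes I_r\in\mathbb{R}^{r\times kr}$ satisfies $(e_{k-i}\otimes I_r)x=\mathrm{Vec_1}(A_i)$. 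Multiplying on the left by $P$ and using \eqref{eqn:monic_quadratic17*} gives $\mathrm{Vec}(A_i)=P\big((e_{k-i}\otimes I_r)(U^\dag b+(I_{kr}-U^\dag U)y)\big)$, which is \eqref{eqn:Expresion_A}.

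For the uniqueness statement I would again appeal to Theorem~\ref{eqn:monic_quadratic38}: $Ux=b$ has a unique solution if and only if $UU^\dag b=b$ and $U^\dag U=I_{kr}$, in which case $x=U^\dag b$. Since the map $x\mapsto(A_{k-1},\dots,A_0)$ is a bijection, uniqueness of $x$ is equivalent to uniqueness of the solution of LPPIEP, and substituting $x=U^\dag b$ into the block-extraction identity gives $\mathrm{Vec}(A_i)=P\big((e_{k-i}\otimes I_r)U^\dag b\big)$.

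I do not expect a serious obstacle here; the argument is essentially careful bookkeeping on top of the generalized-inverse solvability criterion. The one place that needs attention is confirming that the passage from the matrix equation \eqref{eqn:monic_quadratic2} to the vectorized system $Ux=b$ loses nothing --- that is, that imposing $A_i\in\mathcal{L}$ is captured \emph{exactly} by requiring $\mathrm{Vec}(A_i)$ to lie in the column space of $P$, and that the block indexing $i\leftrightarrow k-i$ used in $e_{k-i}\otimes I_r$ matches the ordering chosen in the definition of $x$ in \eqref{eqn:define_x}. Once these identifications are pinned down, the theorem follows directly.
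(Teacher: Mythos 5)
Your proposal is correct and follows essentially the same route as the paper: reduce LPPIEP to the vectorized system $Ux=b$, invoke Theorem~\ref{eqn:monic_quadratic38} for the consistency criterion and general solution, extract $\mathrm{Vec_1}(A_i)$ via $(e_{k-i}\otimes I_r)x$, and map back with $P$. The only difference is that you make the bijection between $x$ and the tuple $(A_{k-1},\dots,A_0)$ explicit, which the paper leaves implicit.
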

\begin{proof}
Computing the  solution of LPPIEP  is  equivalent to solving the
 system of linear equations $Ux=b$ where $U$ and $b$
are defined by \eqref{eqn:define_U} and \eqref{eqn:define_b}.
Necessary and sufficient condition for the existence of the
 solution of  $Ux=b$ is $U U^\dag
b=b$ and  general solution  is given by
\begin{equation}
\label{eqn:monic_quadratic334455} x=U^\dag b+(I_{kr} -U^\dag U) y.
\end{equation}
where $y \in \mathbb{R}^{kr  \times 1}$ is an arbitrary vector.
Note that, $x$ is of the form as in \eqref{eqn:define_x} and
$\mathrm{Vec_1}( A_{k-1})$ can be obtained from $x$ as follows.
\begin{equation*}
\left[
\begin{matrix}
 I_{r} & \Theta  & \Theta  &  \Theta  & \hdots \Theta
\end{matrix}
\right]x= \left[
\begin{matrix}
 I_{r} & \Theta  & \Theta  &  \Theta  & \hdots \Theta
\end{matrix}
\right]\left[
\begin{matrix}
\mathrm{Vec_1}( A_{k-1})\\
\mathrm{Vec_1}( A_{k-2})\\
\vdots\\
 \mathrm{Vec_1( A_0)}
\end{matrix}
\right]=\mathrm{Vec_1( A_{k-1})}
\end{equation*}
where $\Theta$ $\in \mathbb{R}^{r \times r}$ be a zero matrix.
\begin{eqnarray}
\label{eqn:eigenvalue-vector356}
 &&\Rightarrow \mathrm{Vec_1}( A_{k-1})=(e_{1}
\otimes I_{r} )x
\end{eqnarray}
Similarly, $\mathrm{Vec_1}( A_{i})$  are given by
\begin{eqnarray}
\label{eqn:eigenvalue-vector3582}
 &&\mathrm{Vec_1}( A_{i})=\big(e_{k-i}
\otimes I_{r}\big )x ~~ \textrm{for  } i=0,1,2 ,3, \ldots,(k-1)
\end{eqnarray}
Substituting  the expression  of $x$ in
\eqref{eqn:eigenvalue-vector3582}, $\mathrm{Vec_1}(A_{i})$
 can be obtained as in the following as :
\begin{eqnarray}
\label{eqn:eigenvalue-vector35822}
 &&\mathrm{Vec_1}( A_{i})=\left(e_{k-i}
\otimes I_{r}\right )\left(U^\dag b+(I_{kr} -U^\dag U) y\right)
\end{eqnarray}
General solution $A_{i}$  is obtained from the vector
$\mathrm{Vec_1}( A_{i})$  using
 the  relation \eqref{eqn:monic_quadratic17*} as
\begin{eqnarray}
&& \mathrm{Vec}(A_{i}) = P \mathrm{Vec_1}( A_{i}).
\end{eqnarray}
Substituting the expressions  of $\mathrm{Vec_1}(A_{i})$  in the
above equations, we get
\begin{eqnarray}
&& \mathrm{Vec}(A_{i}) = P    \left( (e_{k-i} \otimes
I_{r})\left(U^\dag b+(I_{kr} -U^\dag U) y\right) \right).
\nonumber
\end{eqnarray}
 Further, $Ux=b$ has a
unique solution if and only if $U U^\dag b=b$ and $U^\dag
U=I_{kr}$.  Explicitly, the unique solution $x$ is  given by $
x=U^\dag b$ (see Theorem \ref{eqn:monic_quadratic38}). If $Ux=b$
has a unique solution  then LPPIEP  has a unique solution $A_{i}$.
In that case, matrices $A_{i}$ are given by uniquely as
\begin{eqnarray}
&&\mathrm{Vec}(A_{i}) = P    \left( (e_{k-i} \otimes I_{r} )U^\dag
b \right).
\end{eqnarray}
\end{proof}
\begin{remark}
We considered  the standard ordered basis of $\mathcal{L}$ to
represent any linearly structured matrix and
 we construct the matrix $P$ using this basis. However,  any other
 ordered basis can be chosen  to construct the matrix
$P$. Result of Theorem \eqref{eqn:monic_quadratic44} is also true
if we choose any  other  ordered basis.
\end{remark}

\subsection*{\textbf{Construction of symmetric non-monic matrix polynomials  }}
 In Theorem
\ref{eqn:monic_quadratic44}, we construct the monic linearly
structured matrix polynomial using partial eigendata. Now we
generalize this result to find the symmetric non-monic polynomials
with positive definite leading coefficients using similarity
transformation.

Consider the matrix polynomial $P(\lambda)= \lambda^k A_k +
\lambda^{k-1} A_{k-1}+\cdots+\lambda
A_1+A_0\in{\mathbb{R}^{n\times n}[\lambda]}$ where $A_{i}$  are
symmetric and $A_k $ is positive definite matrix. Let, $A_k^{1/2}$
be the positive definite square-root of $A_k$  and modify the
problem by writing $\xi =A_k^{1/2}x$ and observe that Eq.
\eqref{eqn:system_polyEIG} reduces to the monic problem as
\begin{eqnarray}
&&(\lambda^k A_k^{-1/2} A_k A_k^{-1/2} + \lambda^{k-1} A_k^{-1/2}
A_{k-1} A_k^{-1/2} +\cdots
+A_k^{-1/2} A_0 A_k^{-1/2})A_k^{1/2}x=0\nonumber\\
&&\Rightarrow (\lambda^k I + \lambda^{k-1} \hat{ A}_{k-1}
 +\cdots+\lambda  \hat{ A}_1 +
\hat{ A}_0 )\xi=0\nonumber
\end{eqnarray}
where $\hat{ A}_i=A_k^{-1/2} A_{i} A_k^{-1/2}$ are   symmetric
matrices.

\section{Numerical Example}
In this section, we give three numerical examples to illustrate
the validity of  our proposed approach.

\begin{example}
\label{eqn:examples2}
 {\rm
Consider the mass-spring system  having three degrees of freedom
with the following target set of eigenvalues  $  -1.3064\pm
0.5436i$, $-0.2582$.
 The eigenvalue  and the  eigenvector
 matrices are given by
\begin{equation*}
X= \left[
\begin{smallmatrix}
    -0.0406  &  -0.4699  &  0.4231\\
    -0.4504  &  -0.2542 &   0.3510\\
    0.7128   &  -0.0438 &  -0.8353\\
\end{smallmatrix}
 \right]
\end{equation*}
\begin{equation*}
E= \left[
\begin{smallmatrix}
  -1.3064      &     0.5436     &    0\\
    -0.5436    &    -1.3064     &     0 \\
        0      &      0         &   -0.2582\\
\end{smallmatrix}
 \right]
\end{equation*}

 For this mass-spring system $m=3$ and $n=3$.
Now we construct  the monic  symmetric matrix  polynomial
$P(\lambda)=  \lambda^{2}I_3 +
 \lambda A_1+
 A_0$   of
degree $2$.

We take the standard basis of the space of all symmetric matrices
\{$S_\ell\in$ ${\mathbb{R}^{n\times n}}$ : $ \ell=1,2, \ldots
,6$\}  where $S_1=\left[
\begin{smallmatrix}
1 & 0 &  0    \\
0 & 0 & 0\\
0  & 0 & 0\\
\end{smallmatrix}
\right]$, $S_2=\left[
\begin{smallmatrix}
0 & 1 &  0    \\
1 & 0 & 0\\
0  & 0 & 0\\
\end{smallmatrix}
\right]$, $S_3=\left[
\begin{smallmatrix}
0 & 0 &  1    \\
0 & 0 & 0\\
1  & 0 & 0\\
\end{smallmatrix}
\right]$,
 $S_4=\left[
\begin{smallmatrix}
0 & 0 &  0    \\
0 & 1 & 0\\
0  & 0 & 0\\
\end{smallmatrix}
\right]$, $S_5=\left[
\begin{smallmatrix}
0 & 0 &  0    \\
0 & 0 & 1\\
0  & 1 & 0\\
\end{smallmatrix}
\right]$,$S_6=\left[
\begin{smallmatrix}
0 & 0 &  0    \\
0 & 0 & 0\\
0  & 0 & 1\\
\end{smallmatrix}
\right]$.

Now, we construct  symmetric   matrices $A_0$ and $A_1$ from the
above partial eigendata.
Here,  $UU^\dag b=b$ and $U^\dag U \neq I_{10}$  where $U
\in{\mathbb{R}^{12 \times 10}}$ and $b \in{\mathbb{R}^{12 \times
1}}$. Equation \eqref{eqn:monic_quadratic9} has an infinite number
of solutions. Therefore, LPPIEP has an infinite number of
solutions. Using Theorem  \ref{eqn:monic_quadratic44},  symmetric
 matrices $A_0$ and $A_1$  are given by
   \begin{equation*}
A_0= \left[
\begin{matrix}
     4.2248  & -0.0174  &  2.4278\\
   -0.0174  &  1.8133  &  0.2806\\
    2.4278  &  0.2806  &  1.5618\\
\end{matrix}
 \right]
\end{equation*}
\begin{equation*}
A_1= \left[
\begin{matrix}
    2.3283  &  1.2405 &   2.7130\\
    1.2405  &  0.1189 &  -1.2603\\
    2.7130 &  -1.2603 &   1.9321\\
\end{matrix}
 \right]
\end{equation*}
Next, we study the effect of choosing different ordered basis of
the space of all symmetric matrices to the solution. Using the
ordered basis \{$S_\ell\in$ ${\mathbb{R}^{n\times n}}$ : $
\ell=1,2, \ldots ,6$\}  where $S_1=\left[
\begin{smallmatrix}
1 & 0 &  0    \\
0 & 0 & 0\\
0  & 0 & 0\\
\end{smallmatrix}
\right]$, $S_2=\left[
\begin{smallmatrix}
0 & 1 &  0    \\
1 & 0 & 0\\
0  & 0 & 0\\
\end{smallmatrix}
\right]$, $S_3=\left[
\begin{smallmatrix}
1 & 0 &  1    \\
0 & 0 & 0\\
1  & 0 & 0\\
\end{smallmatrix}
\right]$,
 $S_4=\left[
\begin{smallmatrix}
0 & 0 &  0    \\
0 & 1 & 0\\
0  & 0 & 0\\
\end{smallmatrix}
\right]$, $S_5=\left[
\begin{smallmatrix}
0 & 0 &  0    \\
0 & 0 & 1\\
0  & 1 & 0\\
\end{smallmatrix}
\right]$,$S_6=\left[
\begin{smallmatrix}
0 & 0 &  0    \\
0 & 0 & 1\\
0  & 1 & 1\\
\end{smallmatrix}
\right]$, we construct the matrices $P$ and $D$. Here,  $UU^\dag
b=b$ and $U^\dag U \neq I_{10}$  where $U \in{\mathbb{R}^{12
\times 10}}$ and $b \in{\mathbb{R}^{12 \times 1}}$.
 We also  get the same symmetric
 matrices $A_0$ and $A_1$ as above.

Therefore, if we take two different basis of the space of all
symmetric matrices  for this example, we get  same result.}
\end{example}

\begin{example}
\label{eqn:examples1} {\rm
Consider the mass-spring system  having four degrees of freedom
with the following target set of eigenvalues  $  0.5950 +
9.5092i$, $0.5950 - 9.5092i$.
 The eigenvalue  and the  eigenvector
 matrices are given by
\begin{equation*}
X= \left[
\begin{matrix}
 -0.2164 &    -0.6066\\
  -0.5435  &     -0.0169\\
  -0.3518  &      0.2746\\
  -0.1845  &      0.2374\\
\end{matrix}
 \right]
\end{equation*}
\begin{equation*}
E= \left[
\begin{matrix}
 0.5950 &  9.5092\\
    -9.5092 &   0.5950\\
\end{matrix}
 \right]
\end{equation*}
 For this mass-spring system $m=2$ and $n=4$.

Now we construct  the monic skew symmetric matrix  polynomial
$P(\lambda)=  \lambda^{2}I_4 +
 \lambda A_1+
 A_0$   of
degree $2$.

We take the standard basis of the space of all skew symmetric
matrices \{$S_\ell\in$ ${\mathbb{R}^{n\times n}}$ : $ \ell=1,2,
\ldots ,6$\}  where $S_1=\left[
\begin{smallmatrix}
0 & 1 &  0  &  0  \\
-1 & 0 & 0   &  0 \\
0  & 0 & 0  &  0 \\
0  & 0 & 0  &  0 \\
\end{smallmatrix}
\right]$, $S_2=\left[
\begin{smallmatrix}
0  & 0 &  1   &  0  \\
0  & 0 &  0    &  0 \\
-1 & 0 &  0   &  0 \\
0  & 0 &  0   &  0 \\
\end{smallmatrix}
\right]$, $S_3=\left[
\begin{smallmatrix}
0  & 0 &  0   &  1  \\
0  & 0 &  0   &  0 \\
0  & 0 &  0   &  0 \\
-1 & 0 &  0   &  0 \\
\end{smallmatrix}
\right]$, $S_4=\left[
\begin{smallmatrix}
0  & 0 &  0   &  0  \\
0  & 0 &  1  &  0 \\
0  & -1 &  0   &  0 \\
0 & 0 &  0   &  0 \\
\end{smallmatrix}
\right]$, $S_5=\left[
\begin{smallmatrix}
0  & 0 &  0   &  0  \\
0  & 0 &  0   &  1 \\
0  & 0 &  0   &  0 \\
0 & -1 &  0   &  0 \\
\end{smallmatrix}
\right]$, $S_6=\left[
\begin{smallmatrix}
0  & 0 &  0   &  0 \\
0  & 0 &  0   &  0 \\
0  & 0 &  0   &  1 \\
0 & 0 &  -1  &  0 \\
\end{smallmatrix}
\right]$.

 Here,  $UU^\dag b=b$ and $U^\dag U \neq I_{12}$  where
$U \in{\mathbb{R}^{8 \times 12}}$ and $b \in{\mathbb{R}^{8 \times
1}}$. Equation \eqref{eqn:monic_quadratic9} has an infinite number
of solutions. Therefore, LPPIEP has an infinite number of
solutions. One of the solution $x$ of Equation
\eqref{eqn:monic_quadratic9} is given by\\
$x=[ 6.1761\hspace{.15 cm}
    5.1682\hspace{.15 cm}
    3.0933\hspace{.15 cm}
    2.9398\hspace{.15 cm}
    2.5033\hspace{.15 cm}
    0.6224\hspace{.15 cm}
    3.7036\hspace{.15 cm}
    3.0992\hspace{.15 cm}
    1.8550\hspace{.15 cm}\\
    1.7629\hspace{.15 cm}
    1.5011\hspace{.2 cm}
    0.3732].^T$

Using Theorem  \ref{eqn:monic_quadratic44} matrices $A_0$ and
$A_1$  are given by
   \begin{equation*}
A_0= \left[
\begin{smallmatrix}
        0 &    3.7036  &   3.0992   &  1.8550\\
   -3.7036   &       0  &   1.7629 &    1.5011\\
   -3.0992 &   -1.7629  &        0  &   0.3732\\
   -1.8550  &  -1.5011 &   -0.3732  &        0\\
\end{smallmatrix}
 \right]
\end{equation*}
\begin{equation*}
A_1= \left[
\begin{smallmatrix}
        0  &   6.1761  &   5.1682 &    3.0933\\
   -6.1761  &        0 &    2.9398  &   2.5033\\
   -5.1682 &   -2.9398  &        0  &   0.6224\\
   -3.0933 &   -2.5033 &   -0.6224  &        0\\
\end{smallmatrix}
 \right]
\end{equation*}
Constructed matrices $A_0$ and $A_1$ are skew symmetric and they
satisfy the eigenvalue and eigenvector relation $X E ^2+A_1 X
E+A_0 X=0$ as
 $\Vert X E^2+ A_1 X E+A_0 X  \Vert _F^2$= $ 8.0185   \times 10^{-6}$.
Total computational time for running this program in a system with
$4$Gb ram is $0.078$ seconds.
Therefore, we  successfully reproduced the  eigenvalues and
eigenvectors from the constructed monic skew symmetric quadratic
matrix polynomial.

Next, we study the effect of choosing different ordered basis of
the space of all skew symmetric matrices to the solution.

Using the ordered basis \{$S_\ell\in$ ${\mathbb{R}^{n\times n}}$ :
$ \ell=1,2, \ldots ,6$\} where $S_1=\left[
\begin{smallmatrix}
0 & 1 &  -2  &  0  \\
-1 & 0 & 0   &  0 \\
2  & 0 & 0  &  0 \\
0  & 0 & 0  &  0 \\
\end{smallmatrix}
\right]$, $S_2=\left[
\begin{smallmatrix}
0  & 0 &  1   &  0  \\
0  & 0 &  0    &  0 \\
-1 & 0 &  0   &  0 \\
0  & 0 &  0   &  0 \\
\end{smallmatrix}
\right]$, $S_3=\left[
\begin{smallmatrix}
0  & 0 &  0   &  1  \\
0  & 0 &  0   &  0 \\
0  & 0 &  0   &  0 \\
-1 & 0 &  0   &  0 \\
\end{smallmatrix}
\right]$, $S_4=\left[
\begin{smallmatrix}
0  & 0 &  0   &  0  \\
0  & 0 &  1  &  0 \\
0  & -1 &  0   &  0 \\
0 & 0 &  0   &  0 \\
\end{smallmatrix}
\right]$, $S_5=\left[
\begin{smallmatrix}
0  & 0 &  0   &  0  \\
0  & 0 &  0   &  1 \\
0  & 0 &  0   &  0 \\
0 & -1 &  0   &  0 \\
\end{smallmatrix}
\right]$, $S_6=\left[
\begin{smallmatrix}
0  & 0 &  0   &  0 \\
0  & 0 &  0   &  0 \\
0  & 0 &  0   &  1 \\
0 & 0 &  -1  &  0 \\
\end{smallmatrix}
\right]$, matrices $A_0$ and $A_1$  are given by
   \begin{equation*}
A_0= \left[
\begin{smallmatrix}
        0 &    -1.2396  &   6.4982   &  2.0008\\
   1.2396   &       0  &   4.0440 &    3.6581\\
   -6.4982 &   -4.0440  &        0  &   0.3732\\
   -2.0008  &  -3.6581 &   -0.3732  &        0\\
\end{smallmatrix}
 \right]
\end{equation*}
\begin{equation*}
A_1= \left[
\begin{smallmatrix}
        0  &   6.1815  &   5.1892 &    3.6862\\
   -6.1815  &        0 &    2.7181  &   1.7956 \\
   -5.1892 &   -2.7181  &        0  &   1.3404\\
   -3.6862 &   -1.7956  &   -1.3404  &        0\\
\end{smallmatrix}
 \right]
\end{equation*}
If we take two different basis of the space of all skew symmetric
matrices  for this example, we get  different skew symmetric
matrices. }
\end{example}

\begin{example}
{\rm Consider a $50 \times 50$ triplet $(I_{50}, A_1,A_0)$ where
symmetric tridiagonal matrices $A_0,A_1$ are  generated using the
MATLAB  as
\begin{eqnarray*}
A_1 &=& \mathrm{diag}(a_1)+\mathrm{diag}(b_1,-1)+\mathrm{diag}(b_1,1) \\
A_0 &=&
\mathrm{diag}(a_2)+\mathrm{diag}(b_2,-1)+\mathrm{diag}(b_2,1)
\end{eqnarray*}
where
 $a_1$=[$10$
   $20$
    $6$
    $8$
   $40$
   $10$
  $ 50$ $ 60$
    $3$  $70$ $ 30$   $ 7$ $ 9$  $ 4$  $80$
   $ 4.2$
    $6.5$  $8.1$   $1.2$  $6.2$   $2.7$  $ 4.3$   $3.2$   $2.6$  $14$
   $ 2.9$
  $ 13$ $ 12.4$ $ 4.6$ $14.2$  $8$
    $1.9$  $2.4$ $ 1.6$ $ 25$  $10.84$ $ 22.3$  $ 42.62$ $54.24$
   $26.24$
   $ 1$
    $4$   $0.5$   $0.3$   $ 7$  $3$   $8$   $0.9$  $5$   $0.2$], \\
 $b_1$=[ $2.8$
   $ 1.2$  $36$   $8$   $4$  $16$   $2$  $ 1.2$ $ 28$  $ 12$ $  32$  $3.6$ $   20$  $ 0.8$
   $ 1.8$  $ 0.96$  $ 3.92$  $3.24$ $ 1.04$  $6$  $  0.9$  $3$ $ 0.4$
   $ 4$
   $ 0.2$
   $ 2$  $0.5$  $0.6$  $0.8$   $0.3$  $ 2$  $ 1$  $ 6$  $0.9$   $ 3$   $ 0.4$   $ 4$  $ 0.2$ $ 2$  $ 5$  $ 2$  $ 1$   $0.7$  $ 8$  $ 0.2$  $ 0.6$  $7$   $0.4$   $7$], \\
$a_2$=[ $5.6$  $ 2.4$ $ 16$  $ 8$  $48$  $ 7.2$ $ 24$  $  3.2 $ $
32 $  $ 1.6 $ $  16 $
   $ 4 $  $ 4.8 $  $  6.4 $ $ 72 $ $  80 $$  168 $ $  328$  $432$ $ 200$  $ 17.6$ $ 26.4 $ $  23.2$
  $ 17.6$
  $ 96$
  $ 19.2$
  $ 84 $
  $ 75.2 $  $ 35.6 $ $  85.6$  $52$   $12.4$   $15.6$  $ 11.2$ $168$  $85.04$ $ 175.8$ $ 337.72$ $ 433.44$ $ 207.44$   $ 0.4$  $4$   $0.2$   $2$  $ 0.5$  $ 0.6$  $ 0.8$  $  9$ $ 10 $  $ 21$], \\
$b_2$=[$  3.2$
   $ 3.6$
 $  16$
  $ 20$
  $  8$
  $  4$
   $ 2.8$
  $ 32$
   $ 0.8$ $  2.4$ $ 28$ $  1.6$  $ 28$  $  2$  $ 76$ $ 96$ $ 112$ $ 136$ $ 204$  $ 4$ $  0.2$ $  2$ $   0.5$ $   0.6$  $  0.7$ $ 0.3$  $ 2$
  $ $  1  $ $  6 $ $  8 $ $ 16 $ $  4.8 $ $  6.4 $ $ 32 $ 8  $ $ 40 $ $ 48
  $ $  2.4 $ $ 56 $ $  24 $ $  5.6 $ $  7.2  $ $  3.2  $ $ 64  $ $  3.36  $ $  5.2  $ $  6.48  $ $  0.96 $ $ 4.96$].

We compute all $100$ eigenpairs of $P(\lambda)= \lambda^{2} I_{50}
+ \lambda A_1+A_0$.   Now, we consider the eigenvalues $-1.5564
\pm 0.0232i$, $-2.5036$, $-2.1202$ and corresponding eigenvectors.
We  construct the matrix eigenpairs $(E,X)\in \mathbb{R}^{4 \times
4} \times \mathbb{R}^{50 \times 4}$ using the given four
eigenvalues  and corresponding eigenvectors. Here, $n=50$, $k=2$
and $m=4$. We construct the matrices $U\in{\mathbb{R}^{200 \times
198}}$, $b\in{\mathbb{R}^{200 \times 1}}$ and  observe that
$UU^\dag b=b$ and $U^\dag U = I_{198}$.  Therefore, LPPIEP has  a
unique solution. We construct the symmetric tridiagonal matrices
$A_0\in{\mathbb{R}^{50 \times 50}}$ and $A_1\in{\mathbb{R}^{50
\times 50}}$ using Theorem \ref{eqn:monic_quadratic44} and they
satisfy the eigenvalue and eigenvector relation $X E ^2+A_1 X
E+A_0 X=0$ as
 $\Vert X E^2+ A_1 X E+A_0 X  \Vert _F$= $ 7.1   \times 10^{-8}$.
 Total computational time for running this program in a system with
$4$ GB ram is $1.158$ seconds.
Therefore, we  successfully reproduced the  eigenvalues and
eigenvectors from the constructed monic  symmetric tridiagonal
quadratic matrix polynomial.}

{\rm Similarly for various cases of partial eigendata where
$m=2,6$ and $10$, we construct the matrix eigenpairs $(E,X)\in
\mathbb{R}^{m \times m} \times \mathbb{R}^{50 \times m}$. We
construct the symmetric tridiagonal matrices
$A_0\in{\mathbb{R}^{50 \times 50}}$ and $A_1\in{\mathbb{R}^{50
\times 50}}$ using Theorem \ref{eqn:monic_quadratic44}. The
numerical results are summarized in the following table.
\begin{table}[h]
\caption{Summary of numerical results}
\begin{tabular}{|p{.4 cm}|p{.4 cm}|p{4cm}|p{4.1 cm}| p{1.4 cm}|p{1.1 cm}|}
\hline
n    &  m   &  Conditions Satisfied        &    $\Vert X E^2+ A_1 X E+A_0 X  \Vert _F$ & Solution & Time(s)  \\
\hline
 50    & 2 &    $U U^\dag b=b$,$U^\dag U \neq I_{198}$                     &   $ 2.5   \times 10^{-11}$ &  Infinite  & $1.12$ s \\
\hline
50   &  4  &   $U U^\dag b=b$, $U^\dag U= I_{198}$                      &    $ 7.1   \times 10^{-8}$ & Unique & $1.15$ s   \\
\hline
 50 & 6   &    $U U^\dag b=b$, $U^\dag U=I_{198}$                      &  $ 3.6   \times 10^{-8}$ &  Unique  &  $1.22$ s    \\
\hline
 50 & 10   &   $U U^\dag b=b$,  $U^\dag U = I_{198}$                     &    $ 5.74   \times 10^{-6}$ & Unique   &  $1.29$ s    \\
\hline
\end{tabular}
\end{table}
}
\end{example}
\section{Conclusions}
\label{sec:Conclusions} In this paper, we have studied  the
 linearly structured  partial  polynomial inverse
eigenvalue problem.  A necessary and sufficient condition for the
existence of solution to this problem is derived in this paper.
Additionally, we present an analytical expression of the solution.
Further, we discuss the sensitivity of the solution when the
eigendata is not exactly known. Thus, this paper presents a
complete theory on the structured solution of the inverse
eigenvalue problem for a monic matrix polynomial of arbitrary
degree.

\section*{References}
\bibliographystyle{plain}
\bibliography{AFFINE_BIB1}
\end{document}